\theoremstyle{plain} 
\newtheorem{cor}{Corollary}
\newtheorem{thm}{Theorem}
\theoremstyle{definition}
\theoremstyle{remark}
\numberwithin{equation}{section}
\DeclareMathSymbol{\R}{\mathalpha}{AMSb}{"52}
\DeclareMathSymbol{\C}{\mathalpha}{AMSb}{"43}
\newcommand{\beq}{\begin{equation}}
\newcommand{\eeq}{\end{equation}}
\newcommand{\set}[1]{\left\{#1\right\}}
\newcommand{\pd}{\,\partial}
\newcommand{\bd}{\begin{description}}
\newcommand{\ed}{\end{description}}
\newcommand{\beqr}{\begin{eqnarray}}
\newcommand{\eeqr}{\end{eqnarray}}
\newcommand{\beqt}{\begin{equation}}
\newcommand{\eeqt}{\end{equation}}
\begin{document}

\title[On generating  relative and absolute invariants]
{On generating  relative and absolute invariants of linear
differential equations}
\author[]{J C Ndogmo}

\address{PO Box 2446
Bellville 7535\\
South Africa\\
}
\email{ndogmoj@yahoo.com}

\begin{abstract}
A general expression for a relative invariant of a linear ordinary
differential equations is given in terms of the fundamental
semi-invariant and an absolute invariant. This result is used to
established a number of properties of relative invariants, and it is
explicitly shown how to generate fundamental sets of relative and
absolute invariants of all orders for the general linear equation.
Explicit constructions are made for the linear ODE of order five.
The approach used for the explicit determination of invariants is
based on an infinitesimal method.
\end{abstract}

\thanks{PACS: 02.20Tw, 02.30Hq, 02.30Jr}

\keywords{Relative and absolute invariants, equivalence group,
invariant differential operator, indefinite sequence}
%
%
\maketitle

\section{Introduction}
\label{s:intro}The first invariants found during the early days of
development of the theory of invariants of differential equations
were all relative invariants ~\cite{lapla, lag, brio}, and one of
the very first determination of absolute invariants for differential
equations is perhaps due to Brioschi ~\cite{brio}, who obtained them
as a quotient of two relative invariants. It has subsequently been
shown that in the case of linear ordinary differential equations,
every fundamental absolute invariant can be expressed as a rational
function.\par

These basic properties of relative invariants have led to
significant progress in the study of invariants of differential
equations and their applications, and such studies have been largely
influenced by the work of Halphen ~\cite{halph82, halph66}, and
Forsyth ~\cite{for-inv}. However the methods used by Halphen,
Forsyth, and earlier researchers on the subject  were very intuitive
and most often ad hoc methods requesting tedious calculations just
for finding a couple of invariants.\par
  Based on recent advances in Lie group techniques
 ~\cite{ovsy1, ibra-nl, olvgen, ndogftc}, infinitesimal methods haven
been increasingly used for the investigation of invariants of
differential equations ~\cite{ibra-par, faz,waf,ndogschw, schw,
melesh}. But although these infinitesimal methods provide a more
systematic route for the treatment of invariants of differential
equations and transformation groups, they have been applied to the
determination of relative invariants of differential equations only
in some very rare cases ~\cite{ibra-lap, faz}, and even in those
cases only some very specific relative invariants were obtained in
the usual way as absolute invariants corresponding to partial
structure-preserving transformations, in which some of the variables
in the equation are kept constant. It therefore appears that a
number of interesting properties of these relative invariants have
not yet been uncovered.
\par

Restricting our attention to linear ordinary differential equations,
we determine in this paper some properties of relative invariants,
considered as semi-invariants of the full structure-preserving
transformations of these equations. In particular we show that every
absolute invariant can be expressed as a quotient of a relative
invariant and the fundamental semi-invariant, and we derive a
general expression for these relative invariants. We show how these
relative invariants can be used to obtain invariants of all orders
via invariant differentiation. Our approach for any explicit
determination of invariants is based on the infinitesimal method
recently proposed in
 ~\cite{ndogftc}, and which contrary to the former well-known method
of ~\cite{ibra-nl}, does not require the knowledge of the
structure-preserving transformations, but rather provides it.

\section{Basic properties of relative invariants}\label{s:basic}
 Let $G$ be a Lie group of point transformations of the form
\begin{equation}\label{eq:invgp}
x= \phi (z, w; \tau), \qquad y = \psi ( z, w; \tau),
\end{equation}
 where $\tau$ denotes collectively some arbitrary
parameters specifying the group element in $G$. Consider on the
other hand  a family $ \mathcal{D}$ of differential equations of the
general form
\begin{equation}\label{eq:gnl}
\Omega(x, y_{(n)}; \rho)=0,
\end{equation}
in which $y_{(n)}$ represents the dependent variable $y=y(x)$ and
all its derivatives up to the order $n$, and  $\rho$ represents
collectively some arbitrary functions of $x,$ or arbitrary constants
specifying the family element in $\mathcal{D}.$  We say that $G$ is
the \emph{equivalence group} of ~\eqref{eq:gnl} if it is the largest
group of transformations that maps elements of $\mathcal{D}$ into
itself. In this case, the transformed equation takes the same form
$\Omega(z, w_{(n)}; \theta)=0,$ in terms of the transformed
parameter $\theta,$ and ~\eqref{eq:invgp} is called the
\emph{structure-preserving transformations} of ~\eqref{eq:gnl}. By a
well known result of Lie ~\cite{liegc}, the transformations
 ~\eqref{eq:invgp} induces another group of transformation $G_c$
acting on the parameter  $\rho$ of the differential equation, and we
shall be interested in the invariants of this group action and its
prolongations, and which are commonly referred to as the invariants
(or differential invariants) of ~\eqref{eq:gnl}. As sets, $G$ and
$G_c$ are equal, except that they act on different spaces, and both
are often referred to as the equivalence group of ~\eqref{eq:gnl}.
Thus both sets will often be denoted simply by $G.$
\par

 It should be noted that in ~\eqref{eq:gnl}, $x$
and $y$ each denote collectively all independent and dependent
variables, respectively, so that the equation also includes in
particular all partial differential equations. However, we shall be
interested in this paper in a linear ordinary differential equations
of the general form
\begin{equation}\label{eq:gnlin}
y^{(n)}+ a_1 y^{(n-1)} + a_2 y^{(n-2)}+ \dots + a_n y=0,
\end{equation}
where $a_j= a_j(x)$ are arbitrary functions of the independent
variable $x.$ The structure-preserving transformations of
~\eqref{eq:gnlin} can be written in the form
\begin{equation}\label{eq:gnlingp}
x= \xi (z), \qquad y= \eta(z) w,
\end{equation}
where $\xi$ and $\eta$ are arbitrary functions. Under
 ~\eqref{eq:gnlingp}, the transformed equation of ~\eqref{eq:gnlin}
takes the form
\begin{equation}\label{eq:gnlin2}
w^{(n)} + A_1 w^{(n-1)}+ A_2 w^{(n-2)} + \dots+ A_n w=0,
\end{equation}
 where the $A_j= A_j(z)$ are the new coefficients. Let $a$ denote
collectively the coefficients $a_j= a_j(x).$  A differential
function $F= F(a, a_{(r)}),$ where $a_{(r)}$ represents as usual the
derivative of $a$ up to a certain order $r,$ is called a
\emph{relative invariant} of ~\eqref{eq:gnlin} if
\begin{align}\label{eq:dfnsemi}
 F(\tau \cdot (a, a_{(r)}))& =
\mathbf{w}(\tau)\cdot F(a, a_{(r)}),
\end{align}
for all $a$ and $r,$ and for all $\tau \in G.$ In
 ~\eqref{eq:dfnsemi}, the weight function $\mathbf{w}=
\mathbf{w}(\tau)$ must  be a character of the group $G.$ When
$\mathbf{w}$ is identically equal to one, the function $F$ is called
an \emph{absolute invariant} of ~\eqref{eq:gnlin}. For simplicity,
an expression of the form $F(\tau \cdot (a, a_{(r)}))$ like in
 ~\eqref{eq:dfnsemi} will often be represented by $F_\tau,$ for a
given function ~$F.$

\par
Let's assign to an expression of the form $d^k a_j/d x^k$ the
\emph{weight} $j+k$. We say that a polynomial function $F$ in the
coefficients $a_j$ and their derivatives has weight $m$ if each of
its terms has constant weight $m.$ By combining  a result of Forsyth
 ~\cite{for-inv} according to which all absolute
invariants of ~\eqref{eq:gnlin} can be obtained as rational
functions, and certain results obtained by Halphen ~\cite{halph66},
we readily obtain the following result.
\begin{thm}\label{th:quotient}
Equation ~\eqref{eq:gnlin} has a fundamental set of absolute
invariants consisting of rational functions, in which every element
$F= S_1/S_2$ is the quotient of two relative invariants $S_1$ and
$S_2$ of the same weight $m,$ each of which satisfies a relation of
the form
\begin{equation}\label{eq:quotient}
S(\tau \cdot (a, a_{(r)})) =\xi(z)^{m} S(a, a_{(r)}),\quad \text{
that is } \quad S_\tau= \xi(z)^m S,
\end{equation}
for every $\tau \in G,$ where $S$ denotes any of the invariants
$S_1$ and $S_2.$
\end{thm}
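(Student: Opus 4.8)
The plan is to bootstrap from the two cited classical facts. By Forsyth's result \cite{for-inv} the field of absolute invariants of \eqref{eq:gnlin} is generated by finitely many elements, each a rational function of the coefficients $a_j$ and their derivatives; fix such a fundamental set and let $F$ be one of its members, written in lowest terms as $F=P/Q$ with $P,Q$ coprime differential polynomials over the base field $\K$ of functions of $x$. The task is to show that $P$ and $Q$ may be taken to be relative invariants of one and the same weight $m$, which is exactly the content of \eqref{eq:quotient}.

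The first and principal step is a coprimality argument. For a fixed $\tau\in G$, the prolonged action $a\mapsto\tau\cdot a$ on the coefficients and all their derivatives is an invertible, triangular, affine-linear substitution with coefficients in $\K$ (this is just the explicit form of the new coefficients $A_j$ under \eqref{eq:gnlingp}, together with its differential consequences), its inverse being the substitution attached to $\tau^{-1}$. Hence $P\mapsto P_\tau$ is a $\K$-algebra automorphism of the differential polynomial ring, so it preserves units and carries coprime pairs to coprime pairs. Since $F$ is an absolute invariant, $F_\tau=F$, i.e.\ $P_\tau Q=PQ_\tau$; as $\gcd(P,Q)=1$ and $\gcd(P_\tau,Q_\tau)=1$, this forces $P\mid P_\tau$ and $P_\tau\mid P$, so $P_\tau=\lambda(\tau)P$ for some $\lambda(\tau)\in\K^{\times}$, and then automatically $Q_\tau=\lambda(\tau)Q$ with the \emph{same} multiplier. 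From the composition law of the action, $\lambda$ is a cocycle, hence a character of $G$ as required in \eqref{eq:dfnsemi}; thus $P$ and $Q$ are relative invariants of \eqref{eq:gnlin} sharing the single weight function $\mathbf w=\lambda$.

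It remains to identify $\lambda$ and so to read off \eqref{eq:quotient}. Restricting to the scaling subgroup $x=cz$ with $\eta$ constant, the monomial $d^k a_j/dx^k$ of weight $j+k$ is multiplied by $c^{\,j+k}$, so this subgroup scales the weight-$m$ graded piece of the differential polynomial ring by $c^{\,m}$. Since $\lambda$ restricted to this one-parameter group is a character, it equals $c\mapsto c^{\,m_0}$ for a single integer $m_0$; comparing with $P_\tau=\lambda(\tau)P$ forces every weight-homogeneous component of $P$, and likewise of $Q$, of weight $\neq m_0$ to vanish, so that $P$ and $Q$ are both homogeneous of the common weight $m:=m_0$ in the sense of the paragraph preceding the theorem. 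The precise value of the character on all of $G$, namely $\mathbf w(\tau)=\xi(z)^m$, is then exactly what Halphen's computation \cite{halph66} of the multipliers of relative invariants supplies once $m$ is known. Doing this for each generator in Forsyth's fundamental set (a generator that happens to be a polynomial being written as itself over $1$, both of weight $0$) produces the desired fundamental set of rational absolute invariants, each the quotient of two relative invariants of equal weight.

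I expect the only genuinely delicate point to be the coprimality step, where one must keep track of the ring in which ``lowest terms'' and ``unit'' are taken: the multiplier $\lambda(\tau)$ is a nonvanishing function of $x$, not a numerical constant, so the argument must be run over the differential polynomial ring with function coefficients rather than over a ground field of scalars, and one must confirm that the prolonged action really is an algebra automorphism of that ring. Once that is set up, the remainder is bookkeeping with the weight grading together with an appeal to the two cited results.
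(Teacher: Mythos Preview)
The paper does not actually give a proof of this theorem: the sentence immediately preceding the statement says only that it is obtained ``by combining a result of Forsyth \cite{for-inv} according to which all absolute invariants of \eqref{eq:gnlin} can be obtained as rational functions, and certain results obtained by Halphen \cite{halph66},'' and the text then moves on. So there is nothing to compare against beyond the bare citations.

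Your proposal supplies what the paper omits. The coprimality argument --- write $F=P/Q$ in lowest terms, use that the prolonged action is a ring automorphism to conclude $P_\tau=\lambda(\tau)P$, $Q_\tau=\lambda(\tau)Q$ with a common multiplier --- is the standard and correct way to extract relative invariants from a rational absolute invariant, and your identification of the weight via the scaling subgroup $x=cz$ is exactly how one pins down $m$. You are right that the only genuinely delicate point is keeping the ground ring straight: the units are nonvanishing functions of the independent variable, and the substitution intertwines the change $x\mapsto z$ in that ground ring with the affine action on the $a_j^{(k)}$, so one should phrase the automorphism as one of differential polynomial rings over the corresponding function fields (identified via $\xi$). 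Your parenthetical remark about polynomial generators being ``of weight $0$'' is correct but vacuous here: a weight-$0$ differential polynomial in the $a_j^{(k)}$ is a constant, so in practice every nonconstant rational generator has a nontrivial denominator.

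In short, your argument is sound and considerably more explicit than the paper's appeal to the literature; it is the natural fleshing-out of the two cited facts.
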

The  invariant $F$ of Theorem ~\ref{th:quotient} is also said to be
of weight $m.$ In general, a relative invariant $S$ satisfying
$S_\tau= \xi^r S$ for some $r \in \R$ is said to be of \emph{index}
$r.$ We shall make the result of this theorem more precise in the
next section.
\section{The fundamental relative invariant}
\label{s:fdamental} For a general group of transformations $G$
acting on a manifold $M$, a semi-invariant usually refers to a
function $F$ satisfying a relation similar to that specified by
 ~\eqref{eq:dfnsemi}, and which has the simple form
\begin{equation}\label{eq:dfnsemi2}
F(g \cdot p)= \mathbf{w}(g) \cdot F(p),
\end{equation}
for all $g\in G$ and $p \in M$ such that $g \cdot p$ is defined, and
where the function $\mathbf{w}$ is a character of $G.$ If we let $v$
be a generic element in the generating system for the Lie algebra of
$G$ and denote by $X$ the corresponding infinitesimal generator of
the group action, then ~\eqref{eq:dfnsemi2} implies that
\begin{subequations}\label{eq:infisemi}
\begin{align}
X \cdot F &= - \lambda F, \qquad \text{where} \\
  \lambda &= d\, \mathbf{w}(e) (v),
\end{align}
\end{subequations}
and where $d\, \mathbf{w}(e)$ denotes the differential of
$\mathbf{w}$ at the identity element $e$ of $G.$ A function $F$
satisfying ~\eqref{eq:infisemi} is often called a
\emph{$\lambda$-semi-invariant}. By a nontrivial semi-invariant, we
shall mean a semi-invariant which is not an absolute one, and which
in particular is not a constant function. We now establish the
following result that relates every semi-invariant to a fundamental
set of absolute invariants.
\begin{thm}\label{th:F0}
A function $F$ defined on $M$ is a $\lambda$-semi-invariant of $G$
if and only if it is of the form
\begin{equation}\label{eq:F0}
F= F_0 \Phi,
\end{equation}
where $F_0$ is an arbitrarily chosen  nontrivial
$\lambda$-semi-invariant, and $\Phi$ is an absolute invariant, that
is $\Phi= \Phi(I_1, \dots, I_s),$ where $\set{I_1, \dots, I_s}$ is a
fundamental set of absolute invariants of $G.$
\end{thm}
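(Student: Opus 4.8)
The plan is to prove the two implications separately, using the infinitesimal characterization \eqref{eq:infisemi} as the main tool. The ``if'' direction is the easy one: suppose $F = F_0 \Phi$ with $F_0$ a nontrivial $\lambda$-semi-invariant and $\Phi = \Phi(I_1,\dots,I_s)$ an absolute invariant. For any generator $X$ as in \eqref{eq:infisemi} we compute, using the Leibniz rule for the vector field $X$,
\begin{equation*}
X\cdot F = (X\cdot F_0)\,\Phi + F_0\,(X\cdot\Phi) = (-\lambda F_0)\Phi + 0 = -\lambda F,
\end{equation*}
since $X\cdot\Phi=0$ because $\Phi$ is a function of absolute invariants $I_j$, each annihilated by $X$. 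As this holds for every generator $v$ of the Lie algebra (with the \emph{same} $\lambda$, since $\lambda = d\,\mathbf{w}(e)(v)$ depends linearly on $v$ and $\mathbf{w}$ is fixed by $F_0$), the integrated version \eqref{eq:dfnsemi2} follows, so $F$ is a $\lambda$-semi-invariant.

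For the ``only if'' direction, let $F$ be any $\lambda$-semi-invariant. Since $F_0$ is nontrivial it is not identically zero, so the quotient $\Phi := F/F_0$ is a well-defined function on the (dense, open) locus where $F_0 \neq 0$. The point is to show $\Phi$ is an \emph{absolute} invariant. Applying $X$ and the quotient rule,
\begin{equation*}
X\cdot\Phi = \frac{(X\cdot F)F_0 - F(X\cdot F_0)}{F_0^{2}}
= \frac{(-\lambda F)F_0 - F(-\lambda F_0)}{F_0^{2}} = 0
\end{equation*}
for every generator $X$. Hence $\Phi$ is annihilated by the whole Lie algebra, i.e. $\Phi$ is invariant under (the connected component of) $G$, so it is an absolute invariant. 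By the standard theory of invariants of Lie group actions, every absolute invariant is a function of a fundamental (functionally independent, generating) set $\{I_1,\dots,I_s\}$; therefore $\Phi = \Phi(I_1,\dots,I_s)$ and $F = F_0\Phi$ has the asserted form.

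The main subtlety — and the step I would be most careful about — is the passage between the infinitesimal condition \eqref{eq:infisemi} and the finite condition \eqref{eq:dfnsemi2}, together with the global well-definedness of $\Phi = F/F_0$. Locally near the identity this is just the standard correspondence between a character and its differential, but to conclude on all of $G$ one needs $G$ connected (or at least that $\mathbf{w}$ is determined by its differential on each component); for the equivalence group \eqref{eq:gnlingp} acting via \eqref{eq:gnlin2} this holds on the relevant component, and $\mathbf{w}(\tau)$ is in fact a power of $\xi'$ as in \eqref{eq:quotient}. I would also note that ``absolute invariant'' here should be read as invariance under the identity component, so that the local computation $X\cdot\Phi=0$ genuinely captures it; the representation $\Phi=\Phi(I_1,\dots,I_s)$ then follows from the existence of a complete set of functionally independent invariants for the prolonged action, which is exactly the content invoked implicitly in Theorem \ref{th:quotient}.
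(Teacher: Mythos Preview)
Your proof is correct but takes a different route from the paper's. The paper argues via the method of characteristics: writing $X = \sum_j X_j\,\partial_{u_j}$, it sets up the system
\[
\frac{du_1}{X_1} = \cdots = \frac{du_q}{X_q} = \frac{dF}{-\lambda F},
\]
integrates the last ratio against a suitable earlier one to obtain an integral $F\nu = C$ for some function $\nu(u)$, and then \emph{defines} $F_0 := 1/\nu$, checking afterwards (via Leibniz) that this $F_0$ is indeed a $\lambda$-semi-invariant. So the paper's route is constructive: it manufactures a particular $F_0$ from the characteristic flow rather than assuming one is handed to you.

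You instead take the phrase ``$F_0$ arbitrarily chosen'' at face value and simply observe that $X\cdot(F/F_0)=0$ by the quotient rule, so $F/F_0$ is an absolute invariant and hence a function of a fundamental set. This is shorter, avoids solving any ODE, and handles several generators uniformly, whereas the characteristics argument as written treats a single $X$. What the paper's approach buys in exchange is that it simultaneously \emph{exhibits} the existence of a nontrivial $\lambda$-semi-invariant (namely $1/\nu$), something your argument presupposes; but since the theorem is phrased with $F_0$ already given, your reading is the natural one. Your remarks on connectedness and the infinitesimal-to-global passage are apt and actually go a bit beyond what the paper spells out.
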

\begin{proof}
Suppose that in a coordinates system $\set{u_1, \dots, u_q}=u$ of
$M$ the infinitesimal generator $X$ of $G$  defined as in
 ~\eqref{eq:infisemi} has the form
$$ X= X_1 \pd_{u_1} + \dots + X_q \pd_{u_q},$$
where $X_j= X_j(u).$ Then by a well-known result ~\cite{sned}, the
equivalent system of characteristic equations associated with
 ~\eqref{eq:infisemi} is given by the sequence of $q$ equalities
\begin{equation}\label{eq:charsm}
\frac{d u_1}{X_1}= \frac{d u_2}{X_2} = \dots = \frac{d
u_q}{X_q}=\frac{d F}{ -\lambda F},
\end{equation}
in which the first $q-1$ equalities are the determining equations
for the absolute invariants. To find the general solution to this
system, we can first find the integral resulting from a combination
of the last fraction $d F/ (- \lambda F)$ with any suitable ones
from among the first $q$ fractions. We may assume without loss of
generality that such a combination is given by an equation of the
form
$$  \frac{d F}{ F} = \frac{-\lambda d u_k}{X_k},\qquad
\text{for some $k,\; 1\leq k \leq q$}.$$
This equation clearly has separable variables, since $F$ is
considered as a new independent variable added to the coordinates
system $\set{u_1, \dots, u_q}.$  Its integral can be written in the
form $F \nu= C_{q+1},$ for some function $\nu= \nu(u).$ If we denote
by $I_j(u)= C_j,$ for $j=1, \dots, q-1,$ the other integrals
corresponding to the first $q-1$ equalities in ~\eqref{eq:charsm},
and in which the $C_j$ are arbitrary constants, then the $I_j$ are
the absolute invariants of $G$ and  the general solution of
 ~\eqref{eq:charsm} takes the form
$$ \Phi_1(I_1, \dots, I_q, F \nu)=0, \quad \text{or equivalently } F \nu=
\Phi (I_1, \dots, I_q), $$
for some arbitrary functions $\Phi_1$ and $\Phi.$ To obtain the
function $F_0$ of the theorem we only need to take $F_0= 1/\nu,$ and
it readily follows from the Leibnitz property of the derivation $X$
and the definition of an absolute invariant that $F_0$ is a
$\lambda$-semi-invariant.
\end{proof}
It should be noted that Eq. ~\eqref{eq:F0} also holds with the same
$F_0$ for all prolongations of the group $G.$ Since $F_0$ is
nontrivial, it follows in particular that the set $\set{I_1, \dots,
I_q, F_0}$ is functionally independent. In contrast to the case of
absolute invariants, not every function of semi-invariants is again
a semi-invariant, and although the set of all semi-invariants of $G$
forms a group under functions multiplication, the sum or difference
of two semi-invariants is not  in general a semi-invariant.\par
 For simplicity, we let $I$ denote
collectively all elements in a fundamental set of absolute
invariants of Eq. ~\eqref{eq:gnlin}. We shall also often denote by
$\omega$ an element of the form $(a, a_{(s)}),$ which can be viewed
as an element in the $s$th-jet space determined by the independent
variable $x$ and the dependent variable $a$ of Eq.
~\eqref{eq:gnlin}.
\begin{cor}\label{co:1}
Suppose that $S_1$ is a relative invariant of Eq. ~\eqref{eq:gnlin}
of index $k.$ Then every relative invariant $S_2$ of order $m$ of
the same equation can be put into the form
\begin{equation}\label{eq:gnlsemi}
S_2= S_1^{m/k} \Phi,
\end{equation}
for a certain  function $\Phi= \Phi(I).$ In particular $S_1^m/S_2^k$
is an absolute invariant.
\end{cor}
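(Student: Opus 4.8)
The plan is to recognize a relative invariant of a fixed index as a $\lambda$-semi-invariant of the equivalence group $G$ of \eqref{eq:gnlin} and then invoke Theorem~\ref{th:F0}. Since $S_1$ has index $k$, it satisfies $(S_1)_\tau=\xi(z)^{k}S_1$ for every $\tau\in G$; I write the hypothesis on $S_2$ in the corresponding form $(S_2)_\tau=\xi(z)^{m}S_2$, so that $m$ is the index of $S_2$ (the case $m=0$ being trivial, since then $S_2$ is already absolute). Differentiating each relation with respect to the group parameter at the identity as in \eqref{eq:infisemi} shows that $S_1$ and $S_2$ are a $\lambda_1$- and a $\lambda_2$-semi-invariant, respectively; and because in both cases the multiplier is a power of the single function $\xi(z)$ attached to $\tau$, the infinitesimal weights are $\lambda_1=k\lambda_0$ and $\lambda_2=m\lambda_0$ for one and the same $\lambda_0$, the infinitesimal weight attached to the multiplier $\xi(z)$. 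I tacitly assume $k\neq 0$, which is exactly the statement that $S_1$ is a genuine (non-absolute) relative invariant.

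Next I would introduce the differential function $F_0:=S_1^{m/k}$. From $(S_1)_\tau=\xi(z)^{k}S_1$ one gets $(F_0)_\tau=\xi(z)^{m}F_0$, and from the Leibnitz property of the derivation $X$ one gets $X\cdot F_0=-m\lambda_0\,F_0$; thus $F_0$ is a nontrivial $\lambda_2$-semi-invariant of $G$ having exactly the same index $m$ as $S_2$. Applying Theorem~\ref{th:F0} to $G$ with $F_0$ in the role of the arbitrarily chosen reference semi-invariant and $S_2$ in the role of $F$ gives $S_2=F_0\,\Phi=S_1^{m/k}\Phi$ with $\Phi$ an absolute invariant, hence $\Phi=\Phi(I)$. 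By the remark following Theorem~\ref{th:F0} this identity persists on every prolongation of $G$, so the dependence $\Phi=\Phi(I)$ is to be read with respect to the full prolonged fundamental set. It then follows that $S_1^{m}/S_2^{k}=\Phi^{-k}$, being a function of the $I_j$, is itself an absolute invariant; equivalently, and without any appeal to the formal rewriting, $(S_1^{m}/S_2^{k})_\tau=\xi(z)^{km}S_1^{m}/(\xi(z)^{km}S_2^{k})=S_1^{m}/S_2^{k}$ for all $\tau\in G$.

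The step that needs care is the meaning of the fractional power $S_1^{m/k}$ when $m/k\notin\Z$: as a differential function it is only defined, up to a choice of branch, on the open locus where $S_1$ does not vanish, so the argument above should be read locally there, or after enlarging the ring of differential functions to admit the required root. The clean way to make this rigorous is to reverse the logical order: first check directly from $(S_1)_\tau=\xi(z)^{k}S_1$ and $(S_2)_\tau=\xi(z)^{m}S_2$ that $J:=S_1^{m}/S_2^{k}$ has trivial multiplier and is hence an absolute invariant, so that $J=\Phi_0(I)$ by Theorem~\ref{th:quotient} (or by Theorem~\ref{th:F0}); then \eqref{eq:gnlsemi} is nothing but the rewriting $S_2=S_1^{m/k}\Phi$ with $\Phi:=\Phi_0(I)^{-1/k}$, understood in the same formal sense. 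The remaining verifications — that $F_0$ is indeed a semi-invariant, that $k\neq 0$ may be assumed, and that all relations lift to the prolongations — are routine consequences of Theorem~\ref{th:F0} and the remark following it.
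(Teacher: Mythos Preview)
Your proof is correct and follows essentially the same route as the paper: verify that $S_1^{m/k}$ is a relative invariant of index $m$ (hence a $\lambda$-semi-invariant with the same $\lambda$ as $S_2$), then invoke Theorem~\ref{th:F0} to obtain \eqref{eq:gnlsemi}, and check directly that $(S_1^m/S_2^k)_\tau=S_1^m/S_2^k$ for the second claim. Your additional care with the meaning of the fractional power $S_1^{m/k}$ and the alternative ordering of the argument are refinements the paper does not make explicit, but the underlying idea is identical.
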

\begin{proof}
According to Theorem ~\ref{th:F0}, to prove the first part of the
corollary,  we only need to show that $S_1^{m/k}$ is a
$\lambda$-semi-invariant for the same function $\lambda$ as $S_2,$
and by ~\eqref{eq:infisemi}, we only need to show that $S_1^{m/k}$
is a relative invariant of the same index $m$ as $S_2.$ But since
$(S_1)_\tau = \xi^k S_1$ by assumption, it readily follows that
$(S_1^{m/k})_\tau = \xi^m S_1^{m/k}.$ For the second part we notice
that since $S^p$ has index $r\!\,p$ for every relative invariant $S$
of index $r,$ we must have $(S_1^m/S_2^k)_\tau = S_1^m/ S_2^k.$
\end{proof}
The corollary says that every relative invariant of
~\eqref{eq:gnlin} can be expressed as a product of an arbitrarily
chosen, but fixed relative invariant and an arbitrary function of
$I.$ We call such a fixed relative invariant the \emph{fundamental
relative invariant} of ~\eqref{eq:gnlin}, and denote it and its
index by $S_0$ and $\sigma,$ respectively. We can now give without
any need of calculations a simple formula relating every absolute
invariant with the fundamental relative invariant.
\begin{cor}\label{co:2}
Eq. ~\eqref{eq:gnlin} has a fundamental set of absolute invariants
in which every element has the form $F = S_1 / S_0^{m /\sigma}$ for
some relative invariant $S_1$ of index $m.$
\end{cor}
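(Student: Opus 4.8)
The plan is to combine Theorem \ref{th:quotient} with Corollary \ref{co:1}, using the fundamental relative invariant $S_0$ as the common denominator throughout. First I would invoke Theorem \ref{th:quotient} to obtain a fundamental set of absolute invariants whose elements are of the form $F = S_1/S_2$, where $S_1$ and $S_2$ are relative invariants of the same weight, each satisfying a relation of the form \eqref{eq:quotient}; in the terminology introduced after that theorem, this means $S_1$ and $S_2$ each have a well-defined index, say $S_1$ of index $m$ and $S_2$ of index $m'$. Since $F = S_1/S_2$ is absolute, applying $\tau$ gives $\xi^{m-m'} = 1$ identically in $z$, so in fact $m = m'$; thus both numerator and denominator have the same index $m$.

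Next I would rewrite the denominator $S_2$ in terms of $S_0$. By Corollary \ref{co:1} (with the roles there taken by $S_0$ of index $\sigma$ and $S_2$ of index $m$), we have $S_2 = S_0^{m/\sigma}\,\Psi$ for some function $\Psi = \Psi(I)$ of the absolute invariants. Substituting this into $F = S_1/S_2$ yields
\begin{equation}\label{eq:co2proof}
F = \frac{S_1}{S_0^{m/\sigma}\,\Psi} = \frac{1}{\Psi}\cdot\frac{S_1}{S_0^{m/\sigma}}.
\end{equation}
Since $1/\Psi$ is itself an absolute invariant (a function of $I$), the quantity $\widetilde{S}_1 := S_1/\Psi$ is a relative invariant of the same index $m$ as $S_1$, by the multiplicative closure of semi-invariants noted after Theorem \ref{th:F0}. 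Hence $F = \widetilde{S}_1 / S_0^{m/\sigma}$, which is exactly the asserted form, with $\widetilde{S}_1$ playing the role of the relative invariant of index $m$ in the statement.

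Finally I would check that replacing each original element $S_1/S_2$ of the fundamental set by the corresponding $\widetilde{S}_1/S_0^{m/\sigma}$ still leaves a fundamental set: this is immediate because each new element equals $(1/\Psi)$ times the old one with $1/\Psi$ a function of the remaining invariants, so the functional independence and completeness are preserved. The only point requiring a little care—the main obstacle, such as it is—is the bookkeeping of indices: one must be sure that the $m/\sigma$ appearing as an exponent is exactly the index $m$ of the numerator divided by the index $\sigma$ of $S_0$, which is guaranteed by the equality of the two indices forced in the first step together with the index-arithmetic ($S^p$ has index $rp$) already used in the proof of Corollary \ref{co:1}. Everything else is a direct substitution.
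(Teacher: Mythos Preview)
Your proof is correct and follows essentially the same route as the paper: invoke Theorem~\ref{th:quotient} to write each fundamental absolute invariant as a quotient of two relative invariants of the same index~$m$, then use Corollary~\ref{co:1} to express the denominator as $S_0^{m/\sigma}$ times an absolute invariant and absorb that absolute factor into the numerator. The paper's version applies Corollary~\ref{co:1} to \emph{both} numerator and denominator and sets $S_1 = S_0^{m/\sigma}\Phi_1/\Phi_2$, which amounts to the same thing; note incidentally that your final ``check'' is vacuous, since the rewritten element $\widetilde{S}_1/S_0^{m/\sigma}$ is literally equal to the original $S_1/S_2$, so the fundamental set is unchanged.
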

\begin{proof}
By Theorem ~\ref{th:quotient}, we may write every fundamental
absolute invariant of index $m$ in the form $R_1/ R_2,$ where $R_1$
and $R_2$ are relative invariant of index $m.$ It follows from
Corollary ~\ref{co:1} that $R_1= S_0^{m/ \sigma} \Phi_1$ and $R_2=
S_0^{m/ \sigma} \Phi_2,$  for some functions $\Phi_1$ and $\Phi_2.$
The  corollary is thus proved by  taking $S_1= S_0^{m/ \sigma}
\Phi_1 / \Phi_2.$
\end{proof}
%
%
\begin{cor}\label{co:3}
The weight and the index of every relative invariant coincide.
\end{cor}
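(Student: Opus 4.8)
The plan is to test an arbitrary relative invariant against the one‑parameter dilation subgroup of $G$ and read off the weight and the index from it simultaneously. First I would recall that, by the very meaning of ``weight'' fixed just before Theorem~\ref{th:quotient}, a relative invariant $S$ of weight $m$ is a polynomial in the $a_j$ and their derivatives each of whose monomials $\prod_i d^{k_i}a_{j_i}/dx^{k_i}$ satisfies $\sum_i (j_i + k_i) = m$. If a relative invariant is not assumed a priori to be weight‑homogeneous, one first argues — or cites Halphen/Forsyth — that it must be, this homogeneity being implicit in the statement that ``the weight'' is well defined; the dilation subgroup used below is in fact what forces it.

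Next, consider the subgroup of the structure‑preserving transformations~\eqref{eq:gnlingp} given by $\xi(z) = c\,z$ and $\eta \equiv 1$, that is $x = c z$, $y = w$, with $c$ a nonzero constant. Substituting into~\eqref{eq:gnlin} one finds that the transformed coefficients in~\eqref{eq:gnlin2} are $A_j(z) = c^{\,j} a_j(c z)$, and hence, differentiating, $d^{\,k} A_j/dz^{\,k} = c^{\,j+k}\,\bigl(d^{\,k} a_j/dx^{\,k}\bigr)\big|_{x = c z}$. Consequently every monomial of weight $j+k$ acquires under this transformation precisely the factor $c^{\,j+k}$, so a weight‑$m$ relative invariant $S$ satisfies $S_\tau = c^{\,m} S$ for every such $\tau$, i.e.\ in the notation of~\eqref{eq:dfnsemi} the character takes the value $\mathbf{w}(\tau) = c^{\,m}$ on this subgroup.

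Finally I would compare with the definition of index. If $S$ has index $r$, then by definition $S_\tau = \xi^{\,r} S$ for all $\tau \in G$; evaluated on the dilation above, for which the relevant stretching factor equals $c$, this reads $S_\tau = c^{\,r} S$. Equating the two expressions gives $c^{\,m} S = c^{\,r} S$ for all nonzero $c$, whence $m = r$. Since by Theorem~\ref{th:quotient} and Corollary~\ref{co:1} every relative invariant of~\eqref{eq:gnlin} has a well‑defined index, this establishes the claim. The step I expect to be the main obstacle is the bookkeeping in the middle paragraph: pinning down the transformation law $A_j(z) = c^{\,j} a_j(c z)$ and verifying that the induced action on the $d^{\,k}a_j/dx^{\,k}$ is exactly multiplication by the weight‑power of $c$, with the evaluation point $x = cz$ handled consistently on both sides of~\eqref{eq:dfnsemi}. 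Once that computation is secured, the identification of the weight $m$ with the index $r$ is immediate; a secondary point deserving one sentence is the justification, if it is not taken for granted, that every relative invariant is weight‑homogeneous.
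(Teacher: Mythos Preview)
Your argument is correct and takes a genuinely different, more elementary route than the paper's. The paper proves the corollary by going through the apparatus built up in the preceding pages: it forms the absolute invariant $S/S_0^{m/\sigma}$ and then invokes Theorem~\ref{th:quotient} (the Halphen--Forsyth result) to conclude that the weight of this quotient equals the index $m$ of the denominator $S_0^{m/\sigma}$ on the one hand, and equals the weight $\mu$ of the numerator $S$ on the other, whence $\mu=m$. In effect the paper transfers the equality weight $=$ index from the fundamental relative invariant $S_0$ (for which it is supplied by Theorem~\ref{th:quotient}) to an arbitrary $S$ via Corollary~\ref{co:1}. Your approach bypasses $S_0$, Theorem~\ref{th:quotient}, and Corollary~\ref{co:1} entirely: testing on the dilation subgroup $x=cz$, $y=w$ and computing $A_j(z)=c^{\,j}a_j(cz)$, $d^kA_j/dz^k=c^{\,j+k}a_j^{(k)}(cz)$ exhibits both the weight and the index as the same scaling exponent, so their equality is immediate. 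This is self-contained and explains \emph{why} the two numbers agree; the paper's route has the complementary virtue of showing how its tools interlock. Your flagged bookkeeping step is routine, and your remark that the dilation forces weight-homogeneity is exactly right and worth keeping. The only cosmetic point is that the paper writes the index relation as $S_\tau=\xi(z)^{r}S$ rather than with the derivative $\xi'(z)$; your phrase ``the relevant stretching factor equals $c$'' correctly reads this in the intended sense, since for $\xi(z)=cz$ it is $\xi'(z)=c$ that governs the transformation of the coefficients.
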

\begin{proof}
Suppose that the relative invariant $S$ has weight $\mu$ and index
$m$. Then $S/ S_0^{m / \sigma}$ is an absolute invariant whose
weight is the index  of $S_0^{m/\sigma},$  which is $m$ by Theorem
 ~\ref{th:quotient}. By the same theorem, this weight is the same as
the weight $\mu$ of $S,$ and this proves the corollary.
\end{proof}

\begin{cor}\label{co:4}\parbox[]{1in}{$\quad$}
\begin{enumerate}
\item[(a)] Suppose that $\set{I_1, \dots, I_p}$ is a fundamental set
of absolute invariants of a certain group of equivalence
transformations $G,$ and that $F_0$ is a nontrivial
$\lambda$-semi-invariant of $G.$ Then, $\set{F_0, F_0 I_1, \dots,
F_0 I_p}$ is a maximal set of functionally independent
$\lambda$-semi-invariants of $G.$

\item[(b)] If $I_j= S_j/ S_0^{m_j/\sigma}$ are the fundamental
absolute invariants of Eq. ~\eqref{eq:gnlin} for $j=1, \dots, p,$
where $m_j$ is the index of $S_j,$ then
$$\mathcal{S}=
\set{S_0^{m/\sigma}, S_1^{m/m_1}, \dots, S_p^{m/m_p}}
$$
is a fundamental set of relative invariants of index $m$ of Eq.
 ~\eqref{eq:gnlin}.
\end{enumerate}
\end{cor}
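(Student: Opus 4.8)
The plan is to treat the two parts separately, each by reducing to the machinery already in place—Theorem~\ref{th:F0} together with Corollaries~\ref{co:1}–\ref{co:3}.

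For part (a), I would argue as follows. By Theorem~\ref{th:F0}, every $\lambda$-semi-invariant of $G$ has the form $F_0 \Phi$ with $\Phi = \Phi(I_1,\dots,I_p)$, so each of the functions $F_0, F_0 I_1,\dots,F_0 I_p$ is indeed a $\lambda$-semi-invariant (taking $\Phi\equiv 1$ and $\Phi = I_j$ respectively). Functional independence of the set $\{F_0,F_0I_1,\dots,F_0I_p\}$ follows from the functional independence of $\{I_1,\dots,I_p,F_0\}$ noted just after Theorem~\ref{th:F0}: if there were a nontrivial relation among the $F_0 I_j$ and $F_0$, dividing through by $F_0$ (which is nonvanishing on a dense open set, being nontrivial) would produce a relation among the $I_j$ and constants, contradicting maximality of the fundamental set. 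Maximality of the new set then follows because any further $\lambda$-semi-invariant $F$ is $F_0\Phi(I_1,\dots,I_p)$, and $\Phi$ is a function of $I_j = (F_0 I_j)/F_0$, hence $F$ is functionally dependent on $\{F_0, F_0 I_1,\dots, F_0 I_p\}$. I expect this part to be essentially routine, the only delicate point being the precise formulation of ``maximal set of functionally independent semi-invariants'' and ensuring that division by $F_0$ is legitimate in the appropriate (generic) sense.

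For part (b), I would first observe that by Corollary~\ref{co:1} (with $S_0$ of index $\sigma$ playing the role of the reference relative invariant) the quantity $S_0^{m/\sigma}$ is a relative invariant of index $m$, and likewise each $S_j^{m/m_j}$ is a relative invariant of index $m$, since $S_j$ has index $m_j$ and raising to the power $m/m_j$ multiplies the index by that factor (by the same computation as in the proof of Corollary~\ref{co:1}: $(S_j^{m/m_j})_\tau = \xi^m S_j^{m/m_j}$). So every element of $\mathcal{S}$ genuinely is a relative invariant of index $m$. It then remains to show $\mathcal{S}$ is \emph{fundamental}, i.e. every relative invariant of index $m$ is a function of the elements of $\mathcal{S}$. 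Given such an $S$, Corollary~\ref{co:1} gives $S = S_0^{m/\sigma}\Phi(I)$ for some $\Phi$, and since $I_j = S_j/S_0^{m_j/\sigma} = S_j^{m/m_j}/S_0^{m/\sigma}$ (using $I_j = S_j/S_0^{m_j/\sigma}$ and rewriting via the index-$m$ representatives), each $I_j$ is a ratio of two elements of $\mathcal{S}$; hence $S$ is expressible through $\mathcal{S}$.

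The main obstacle I anticipate is purely bookkeeping: making sure the exponents line up so that $I_j$ can be written as $S_j^{m/m_j}/S_0^{m/\sigma}$ rather than as $S_j/S_0^{m_j/\sigma}$—this requires knowing, from Corollary~\ref{co:1} applied to both $S_j^{m/m_j}$ and $S_0^{m/\sigma}$, that their ratio equals $(S_j/S_0^{m_j/\sigma})$ up to no extra absolute-invariant factor, which follows because both sides have index $0$ and the reference representative $S_0$ is common to both. Apart from that, nothing here requires new ideas beyond the corollaries already established.
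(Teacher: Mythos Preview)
Your part (a) is correct and matches the paper's argument exactly: Theorem~\ref{th:F0} gives both that each $F_0 I_j$ is a $\lambda$-semi-invariant and that any other $\lambda$-semi-invariant is $F_0\Phi(I_1,\dots,I_p)$, hence a function of the set; functional independence comes from that of $\{F_0,I_1,\dots,I_p\}$.

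For part (b) your overall strategy is fine, but the bookkeeping you flagged as the ``main obstacle'' does in fact trip you up. The identity you write,
\[
I_j \;=\; S_j/S_0^{m_j/\sigma} \;=\; S_j^{m/m_j}/S_0^{m/\sigma},
\]
is false: the right-hand side equals $(S_j/S_0^{m_j/\sigma})^{m/m_j}=I_j^{m/m_j}$, not $I_j$. Your attempted justification (``both sides have index $0$ and the reference representative $S_0$ is common'') only shows both are absolute invariants, not that they agree. The fix is immediate---$I_j$ is still a function of $S_j^{m/m_j}/S_0^{m/\sigma}$, namely its $(m_j/m)$-th power---so your conclusion that any index-$m$ relative invariant is expressible through $\mathcal{S}$ survives. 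You also need to say a word about functional independence of $\mathcal{S}$, which you omit; it follows at once since the map $(T_0,T_1,\dots,T_p)\mapsto (T_0,T_1/T_0,\dots,T_p/T_0)$ carries $\mathcal{S}$ to $\{S_0^{m/\sigma},I_1^{m/m_1},\dots,I_p^{m/m_p}\}$, which is functionally independent because $\{F_0,I_1,\dots,I_p\}$ is.

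The paper organizes part (b) slightly differently and thereby sidesteps the exponent issue entirely: it first replaces the fundamental set $\{I_j\}$ by the equally fundamental set $\{I_j^{m/m_j}\}$, observes that $S_j^{m/m_j}/S_0^{m/\sigma}=I_j^{m/m_j}$ exactly, and then applies part (a) with $F_0=S_0^{m/\sigma}$ to obtain $\mathcal{S}=\{F_0,F_0 I_1^{m/m_1},\dots,F_0 I_p^{m/m_p}\}$ directly. This is the same idea as yours, but packaging the passage to index-$m$ representatives as a change of fundamental set lets part (a) do all the work (including functional independence) in one stroke.
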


\begin{proof}
Since $F_0$ is a nontrivial semi-invariant of $G,$ the given set
$\set{F_0, F_0 I_1,  \dots, F_0 I_p}$ clearly forms a functionally
independent set of semi-invariants of $G,$ by Theorem ~\ref{th:F0}.
By the same theorem, any other semi-invariant of $G$ has the form
$S= F_0 \Phi,$ where $\Phi= \Phi (I_1, \dots, I_p),$ and this
readily proves part (a) of the corollary.\par
On the other hand, every element in the given set $\mathcal{S}$ is
clearly a relative invariant of index $m.$ if we replace each $I_j$
by
$$I_j^{ m / m_j} = S_j^{m/m_j} / S_0^{m/\sigma},$$
the resulting set formed by the $I_{j}^{m/m_j}$ is  a fundamental
set of absolute invariants of the same index $m$ of Eq.
 ~\eqref{eq:gnlin}. Thus by ~\eqref{eq:infisemi} the elements of
$\mathcal{S}$ are $\lambda$-semi-invariants corresponding to the
same function $\lambda.$ Therefore, part (b) of the corollary
readily follows from part (a).
\end{proof}
It also follows from part (b) of this corollary that
$\set{S_0,S_1,\dots,  S_p}$ is a functionally independent set of
relative invariants, but whose elements do not have the same index.

\section{Application to the determination of invariants}
\label{s:appl} It is well-known, by a result of Lie, that all higher
order differential invariants of a transformation group can be
obtained from a generating system of lower order ones by means of
invariant differentiation. This often reduces the problem of
determination of invariants  to finding a generating system  of
invariants and the invariant differential operators. To begin with,
suppose that we know two functionally independent absolute
invariants $I_0$ and $I_1$  of equation ~\eqref{eq:gnlin}, and that
these are rational functions of the form
\begin{equation}\label{eq:I0}
I_0= R_0^\sigma/S_0^k, \qquad I_1= S_1^\sigma/S_0^{m}
\end{equation}
where $S_0$ is the fundamental relative invariant of index $\sigma$
already introduced, while $R_0$ and $S_1$ are relative invariants of
indices $k$ and $m,$ respectively. We also assume that $S_1$ is of
order $\mu$ as a function of the independent variable $x,$ and that
all other relative invariants are of order at most $\mu$ in $x.$
Then by means of the differential operator $\zeta D_x,$ where
$\zeta= 1/I_0', \; I_0' = d I_0/dx$ and  $D_x= d/dx,$ we can
generate a new absolute invariant $I_1^*= \zeta D_x(I_1),$ whose
order is $\mu+1$ in general. In terms of the relative invariants in
 ~\eqref{eq:I0}, we have
\begin{equation}\label{eq:I1p}
I_1^* =  \frac{I_1}{I_0}\, \frac{R_0}{S_1}\, \frac{(m S_1 S_0' - r
S_0 S_1')}{(k R_0 S_0' - r S_0 R_0{\,\!\! '})},
\end{equation}
where $h' \equiv d h /d x$ for every function
 $h = h(x).$ It follows from Corollary ~\ref{co:2} that to find an absolute
invariant of higher order $\mu+1,$ we only need to find a relative
invariant of order $\mu+1.$ We now introduce the notation

\begin{subequations}\label{eq:varphi}
\begin{alignat}{2}
\varphi(R_1, R_2) &= m_1 R_1 R_2' - m_2 R_2 R_1', \quad  &\chi(R_1,
R_2)&= \frac{\left[\varphi (R_1, R_2)\right]^{m_2}}{R_2^{m_1+
m2+1}}\\
\varphi_0(R_1, R_2) &= R_1, \quad  &\chi_0(R_1, R_2) &=
R_1^{m_2}/R_2^{m_1}
\end{alignat}
\end{subequations}
for any relative invariants $R_1$ and $R_2$ of respective index
$m_1$ and $m_2.$ For simplicity of notation, when the second
argument $R_2$ is fixed and there is no possibility of confusion, we
shall set $\varphi(R_1)= \varphi(R_1, R_2)$ and $\chi
(R_1)=\chi(R_1, R_2).$ By multiplying $I_1^*$ by the absolute
invariant $I_0/ I_1$ and the relative invariant $S_1/ R_0,$ we
obtain a relative invariant of the form $F= \varphi(S_1,
S_0)/\varphi(R_0, S_0).$ By an application of Theorem ~\ref{th:F0},
it is easy to see that both $\varphi(S_1)$ and $\varphi(R_0)$ are
relative invariants, and they clearly have indices $m+r+1$ and
$k+r+1,$ respectively. Moreover, $\varphi (S_1)$ has the required
order $\mu+1,$ and it gives rise to the absolute invariant
\begin{equation}\label{eq:chi}
\chi (S_1,
S_0)=\frac{\left[\varphi(S_1)\right]^\sigma}{S_0^{(m+r+1)}}.
\end{equation}
Owing to the arbitrariness of the function $S_1,$ Eqs.
 ~\eqref{eq:varphi} and ~\eqref{eq:chi} show that starting with any
relative invariant $S_1$ of order $\mu,$ we can construct an
indefinite  sequence $\varphi_q (S_1)= \varphi^q (S_1)$ of relative
invariants and $\chi_q(S_1)= \chi^q (S_1)$ of absolute invariants,
each having a general term of order $\mu+q$. To write down the
expression for the general terms of these sequences, we only need to
note that $\varphi_q (S_1)$ has index $\theta (q)= m+ q (\sigma+1).$
Consequently, we have
\begin{align}\label{eq:varphiseq}
  \varphi_q (S_1)&= \theta (q-1) \varphi^{q-1}(S_1) S_0' - r S_0\, \varphi^{q-1}(S_1)', \qquad \chi_q
  (S_1)= \frac{\left[\varphi_q(S_1)\right]^\sigma}{  S_0^{\theta (q)}},\\
\intertext{ where } \varphi^{q-1}(S_1)'&= d (  \varphi^{q-1}(S_1))/
dx. \notag
\end{align}
A similar sequence of relative and absolute invariants can be
obtained using the relative invariant $\varphi(R_0, S_0)$. \par
In the case of ODEs only one differential operator of the form
$\zeta D_x$ is required and once this operator is known, finding
fundamental sets of invariants  only requires a suitable generating
system of invariants, and these invariants must be explicitly
calculated. As already mentioned, earlier methods for finding these
invariants such as those used in ~\cite{halph66} were very intuitive
and quite informal, and they were designed only for linear
equations. However, based on ideas suggested by Lie ~\cite{lie1},
and starting with some works undertaken by Ovsyannikov
~\cite{ovsy1}, the development of infinitesimal methods started to
grow and a number of Lie groups techniques for invariant functions
have been proposed in the recent scientific literature
 ~\cite{ibra-nl, olvmf, olvgen, ndogftc}. These Lie groups methods
provide a systematic means for finding invariant functions and some
of them have far reaching immediate applications (see ~\cite{olvmf,
ndogftc}). For instance, the method of ~\cite{ndogftc} that we shall
use for the explicit determination of the invariants also provides,
in infinitesimal form,  the structure-preserving transformations of
any differential equation. When it is applied to Eq.
~\eqref{eq:gnlin}, the coefficients $a_j(x)$ of the equation are
considered as dependent variables on the same footing as $y$ and the
infinitesimal generators $X^0$ of the equivalence group $G_c$ takes
the form
\begin{equation}\label{eq:infign}
X^0 = f \pd_{x} + \sum_{i=1}^n \phi_i \pd_{a_i}.
\end{equation}
The explicit expression  of $X^0$ depends on various canonical forms
adopted for the general linear ODE. By a change of variable of the
form $x=z$ and $y= \exp\left(- \int a_1 dz \right) w,$ Eq.
 ~\eqref{eq:gnlin} can be put in the form
\begin{equation}\label{eq:nor}
w^{(n)} + b_2 w^{(n-2)} + \dots + b_n w=0
\end{equation}
which is deprived of the term of second highest order, and in which
the $b_j = b_j(z)$ are the new coefficients. Similarly, by a change
of variables of the form
\begin{subequations}\label{eq:chg2schw}
\begin{align}
\set{z, x}&= \frac{12}{n (n-1)(n+1)}  a_{2}, \qquad y =
\exp\left(- \int a_1 d z\right) w,\\
\intertext{ where } \set{z, x} &= \left(z\,' z\,^{(3)} - (3/2)
z\,''^{\,2} \right)\, z\,'^{\,-2}
\end{align}
\end{subequations}
is the Schwarzian derivative, and where $z\,' = d z/ dx,$ we can put
Eq. ~\eqref{eq:gnlin} into a form in which the terms of orders $n-1$
and $n-2\,$ do not appear. After the renaming of variables and
coefficients of the new equation using the same notation as for the
original equation ~\eqref{eq:gnlin}, the transformed equation takes
the form
\begin{equation}\label{eq:schw}
y^{(n)} + a_3 y^{(n-3)} + \dots + a_n y=0.
\end{equation}
In fact, due to the prominence in size of invariants of differential
equations and the rapid rate at which this size increases with the
number of nonzero coefficients in the equation, it is customary to
use the canonical form ~\eqref{eq:schw} of ~\eqref{eq:gnlin} for the
investigation of invariants ~\cite{for-inv, brio}.\par
A determination of invariants of linear ODEs by means of the
infinitesimal generator of the form ~\eqref{eq:infign} was made in
 ~\cite{ndogschw} for equations of order up to five in the various
canonical forms ~\eqref{eq:gnlin}, ~\eqref{eq:nor}, and
 ~\eqref{eq:schw}, but only for low orders of prolongation of the
operator $X^0$ not exceeding three. We now undertake the explicit
determination of a generating system of invariants of all orders by
an application of formulas ~\eqref{eq:varphi}-\eqref{eq:varphiseq}.
For this purpose we shall adopt the canonical form ~\eqref{eq:schw}
for which the invariants have a much simpler expression, and we
restrict our attention to the case of equations of order $n=5$ which
have the  form
\begin{equation}\label{eq:sh5}
y^{(5)} + a_3 y''+ a_4 y' + a_5 y=0.
\end{equation}
For every $n>3$, and for any order $p$ of prolongation of the
operator $X^0,$ the number $\Gamma$ of absolute invariants of Eq.
\eqref{eq:schw} can be shown ~\cite{ndogschw} to be given by
\begin{equation}\label{eq:nbschw}
    \Gamma = n+4 - p(n-2),
\end{equation}
and this indicates that for a fixed value of $n,$ as the order of
prolongation of $X^0$ increases by one unit, the number of absolute
invariants increases by $n-2,$ which is precisely the number of
coefficients in the equation. This means that a generating system of
absolute invariants should contain $n-2$ elements, which by virtue
of ~\eqref{eq:nbschw} can be taken to be functionally independent,
provided that none of the coefficients $a_j$ vanishes. For
 ~\eqref{eq:sh5},  $X^0$ depends on three arbitrary constants $k_1,
k_2$ and $k_3$ and has the explicit form
\begin{equation}\label{eq:ifsh5}
\begin{split} X^0 &= \left[ k_1+ x (k_2 + k_3 x)\right] \pd_x - 3 a_3 (k_2 + 2 k_3 x)\pd_{a_3} \\
 & \quad -2 \left[-3 a_3 k_3 +2 a_4 (k_2 + 2 k_3 x)\right]\pd_{a_4} + \left[ -4 a_4 k_3
 - 5 a_5 (k_2+ 2 k_3 x)  \right] \pd_{a_5}\end{split}
\end{equation}
and its first prolongation has four invariants given by
\begin{subequations}\label{eq:ivp1sh5}
\begin{align}
I_0 &=  \frac{\left(3a_5\, a_3 - a_4^2  \right)^3}{27 a_3^8} \label{eq:schp0}\\
I_1 &= \frac{(- a_4 + a_3')^3}{a_3^4} \\
I_2 &=  \frac{\left( 6 a_3 a_4' - a_4^2 - 6 a_4\, a_3'\right)^3}{216\, a_3^8}\\
I_3 &=\frac{5 a_4^3 + 9 a_3^2\, a_5' - 3 a_4\, a_3(5 a_5  + 2 a_4')+
3 a_4^2 \, a_3'}{9 a_3^4}
\end{align}
\end{subequations}
The fundamental relative invariant of ~\eqref{eq:schw} is readily
seen to be given by $S_0=a_3(x),$ and Eq. ~\eqref{eq:ivp1sh5} shows
that the four functions
\begin{align*}
R_0 &= 3a_5\, a_3 - a_4^2  \\
S_1 &=- a_4 + a_3'  \\
S_2 &= 6 a_3 a_4' - a_4^2 - 6 a_4\, a_3' \\
S_3 &= 5 a_4^3 + 9 a_3^2\, a_5' - 3 a_4\, a_3(5 a_5  + 2 a_4')+ 3
a_4^2 \, a_3'
\end{align*}
are relative invariants of respective index $8, 4, 8,$ and $12.$  In
terms of these relative invariants, we have
\begin{align}\label{eq:ifsh5sm}
I_0 &=  \frac{R_0^3 }{ S_0^8},\qquad  I_1 = \frac{S_1^3 }{ S_0^4},
\qquad I_2 = \frac{S_2^3 }{ S_0^8},\qquad  I_3 = \frac{S_3}{ S_0^4}.
\end{align}
Moreover, each of the four functions $R_0, S_1, S_2,$ and $S_3$ is
of order at most one, and together they form a functionally
independent set. Consequently, on in accordance with
 ~\eqref{eq:varphiseq}, $\mathcal{B}= \set{I_0, I_1, I_2, I_3}$ forms
a functionally independent generating system of absolute invariants
which may be called a basis of absolute invariants of
 ~\eqref{eq:schw}.\par

  It also follows that a fundamental set of absolute
invariants of order $p \geq 1$ is given by the functions
\begin{equation}\label{eq:ifsh5bs}
I_0, \quad \chi_{k} (S_1),\quad \chi_{k} (S_2),\quad \chi_k
(S_3),\quad  \text{ for $k=0, \dots, p-1$}
\end{equation}
and their number is $3 p +1,$ which is in accordance with equation
 ~\eqref{eq:nbschw}. It follows from Eq. ~\eqref{eq:varphiseq} that to
find explicit expressions for the absolute invariant $\chi_k (S_j)$,
we only need to compute the corresponding relative invariant
$\varphi_k (S_j)$ and to determine its index. For $S_1,$  the
corresponding higher order relative invariants up to the order three
are
\begin{align*}
\varphi (S_1) &=  4 (a_4 - a_3') a_3' + 3 a_3 (-a_4' + a_3'') \\
\varphi_2 (S_1)& = 32 (a_4 - a_3') a_3'^2 -
  3 a_3 (9 a_4' a_3'+ (4 a_4 - 13 a_3') a_3'') \\
       &\quad  +
  9 a_3^2 (a_4'' - a_3^{(3)})
\end{align*}
and their indices are clearly $8$ and $12,$ respectively. The size
of these two invariants are much more smaller compared to those
corresponding to $S_2$ and $S_3,$ simply because $S_1$ has a much
smaller size compared to $S_2$ and $S_3.$ For instance the
expression for $\varphi_k (S_3)$ is about five times larger in size
than that for $\varphi_k (S_1),$ for $k=1,2.$ It turns out that the
expression for the invariants computed explicitly by means of the
infinitesimal generator $X^0$ are considerably much smaller in size
than those computed from the indefinite sequence
~\eqref{eq:varphiseq}. Indeed, a direct computation shows that a
fundamental set of differential invariants of order two is given by
\begin{align*}
I_4 &=  \frac{(7 a_4^2 - 14 a_4 a_3'+ 6 a_3 a_3'')^3}{216\, a_3^8}\\
I_5 &=  \frac{4 a_4^3+ 24 a_4^2 a_3'+ 9 a_3^2 a_4''- 9 a_4 a_3 (3 a_4'+ a_3'')}{9\, a_3^4}\\
I_6 &=\frac{(-18 a_4^4-18 a_4^3 a_3'+ 18 a_3^3 a_5''- 6 a_4 a_3^2
(11 a_5' + 2 a_4'')+ a_4^2 a_3 (55 a_5+ 40 a_4' + 6 a_3''))^3}{5832
 \, a_3^{16}}
\end{align*}
and that for differential invariants of order three is given by
\begin{flushleft}
\begin{align*}
I_7 &=\frac{-14 a_4^3+ 42 a_4^2 a_3'- 36 a_4 a_3 a_3'' + 9 a_3^2 a_3^{(3)}}{9\, a_3^4} \\
I_8 &= \frac{(-2 a_4^4-12 a_4^3 a_3'+ 3 a_4^2 a_3 (5 a_4'+ 3 a_3'')+
2 a_3^3 a_4^{(3)}- 2 a_4 a_3^2(5 a_4'' + a_3^{(3)}))^3}{8\, a_3^{16}} \\
I_9 &= (S_{9,1}+ S_{9,2})^3/(5832\, a_3^{20})
\intertext{ where }
S_{9,1} &= 35 a_4^5 + 45 a_4^4 a_3'- 10 a_4^3 a_3 (11 a_5+ 11 a_4'+
3
a_3'') + 18 a_3^4 a_5^{(3)}\\
S_{9,2}&= -12 a_4 a_3^3 (9 a_5''+ a_4^{(3)}+ 6 a_4^2 a_3^2 (33 a_5'
+ 11 a_4''+ a_3^{(3)})^3).
\end{align*}
\end{flushleft}
As already mentioned  these absolute invariants of ~\eqref{eq:sh5}
are given in ~\cite{ndogschw}, but only for the second prolongation
of the operator $X^0,$ and not in a form in which the index can be
readily read off. They indeed appear to be smaller in size than
those derived from the recurrence relations ~\eqref{eq:varphiseq},
which indicates that a further simplification of the expression of
the function $\varphi$ might be possible. However, such a
simplification might lead to more complicated recurrence equations
for the indefinite sequence of invariants. It's however naturally
much easier to generate higher order invariants using the indefinite
sequence ~\eqref{eq:varphiseq}, rather than finding first the right
prolongation of $X^0$ and then solving the corresponding system of
equations to find the invariants. \par
Similarly, using Eqs. ~\eqref{eq:ifsh5sm}, ~\eqref{eq:ivp1sh5}, and
 ~\eqref{eq:varphiseq}, we can also determine, by invoking Corollary
 ~\ref{co:4}, a fundamental set of relative invariants of all orders.
For instance,  fundamental relative invariants of order up to two of
 ~\eqref{eq:sh5} are given by
\begin{subequations} \label{eq:smp2sh5}
\begin{align}
\varphi_k (S_j, S_0),& \quad j=1, 2, 3, \quad k=0, 1\\
\varphi_k(R_0, S_0),&\quad \varphi_k(S_0, R_0), \quad k=0, 1, 2.
\end{align}
\end{subequations}
Incidently, Halphen ~\cite{halph66} constructed by a different
method an indefinite sequence of relative invariants similar to that
given in ~\eqref{eq:smp2sh5}, but for linear ODEs of order $4$ in
the canonical form ~\eqref{eq:gnlin}. In particular the approach of
 ~\cite{halph66} is not based on infinitesimal nor Lie groups
 methods and it aimed only
at deriving and indefinite sequence of relative invariants from
known ones. Consequently, the sequence of relative invariants
obtained in the said paper for equations of order $4$ is not
associated with the determination of a fundamental set of absolute
invariants.



%

%

\begin{thebibliography}{99}
\bibitem{lapla} P.S. Laplace, Recherches sur le calcul
int\'egral aux diff\'erences partielles,  M\'emoires de l'Acad\'emie
Royales des Sciences de Paris  (1773/77)  341-402 . Reprinted in
Oeuvres compl\`etes de Laplace,  9 Gauthier-Villars, Paris, 1893,
5-68, English transl., New York, 1966.

\bibitem{lag} E.H. Laguerre, Sur les equations diff\'erentielles
lin\'eaires du troisi\`eme ordre,  Comptes Rendus  de l'Acad\'emie
des Sciences de Paris 88 (1879) 116-119.

\bibitem{brio} F. Brioschi, Sur les \'equations
diff\'erentielles lin\'eaires,  Bulletin de la Soci\'ete Math\'emat.
de France  7 (1879) 105-108.

\bibitem{halph82} G.H. Halphen, M\'emoire sur la r\'eduction des
\'equations diff\'erentielles lin\'eaires aux formes int\'egrables,
M\'emoires des Savants \'Etrangers  28 (1882).

\bibitem{halph66} G.H. Halphen, Sur les invariants des \'equations
diff\'erentielles lin\'eaires du quatri\`eme ordre, Acta Mathematica
3 (1966) 325-380.

\bibitem{for-inv} A.R.Forsyth, Invariants, covariants,  and
quotient-derivatives associated with linear differential equations,
Philosophical Transactions of the Royal Society of London 179 (1888)
377-489.

\bibitem{ovsy1} L.V. Ovsyannikov, Group analysis of Differential
Equations, Nauka, Moscow, 1978.

\bibitem{ibra-nl} N.H. Ibragimov, Invariants of a remarkable
family of nonlinear equations,  Nonlinear Dyn.  30 (2002) 155-166.

\bibitem{olvgen} P.J. Olver, Generating differential invariants
J. Math. Anal. Appl.  333 (2007) 450-471.

\bibitem{ndogftc} J.C. Ndogmo, A method for the equivalence group
and its infinitesimal generators,  J. Phys. A: Math. Theor. 41
(2008) 102001.

\bibitem{ibra-par} N.H. Ibragimov, S.V. Meleshko,  E. Thailert, Invariants of linear
parabolic differential equations,  Commun.  Nonlinear Sci. Numer.
Simul.  13 (2008) 277-284.

\bibitem{faz} I.K. Johnpilai, F.M. Mahomed, Singular invariant
equation for the $(1+1)$ Fokker-Planck equation,  J. Phys. A: Math.
Gen.  34 (2001) 11033-11051.

\bibitem{waf} I.K. Johnpillai, F.M. Mahomed, C. Wafo Soh,
Basis of joint invariants for $(1+1)$ Linear hyperbolic equations,
Journal of Nonlinear Mathematical Physics   9  (2002) 49-59.

\bibitem{ndogschw} J.C. Ndogmo, Invariants associated with linear ordinary differential
equations, arXiv:0806.4352 (2008), preprint.

\bibitem{schw} F. Schwarz, Equivalence classes, symmetries and
solutions of linear third-order differential equations, Computing 69
(2002) 141 - 162.

\bibitem{melesh} S.V. Meleshko, Generalization of the equivalence
transformations,  Nonlinear Mathematical Physics  3 (1996) 170-174.

\bibitem{ibra-lap} N. Kh. Ibragimov, Invariants of hyperbolic equations: Solution of
the Laplace problem,  J. Appl. Mech. Tech. Phys.  45 (2004) 158-166.

\bibitem{liegc} S. Lie,  Theorie der transformationsgruppen I,
II, III, Teubner, Leipzig, 1888. Reprinted by Chelsea Publishing
Compagny, New York, 1970.

\bibitem{sned} I.N. Sneddon, Elements of Partial Differential
Equations, McGraw Hill, New York,1957.

\bibitem{lie1} S. Lie, Zur allegemeinnen theorie der partiellen
differentialgleichungen beliebigerordnung,  Leipziger Berichte 1
(1895) 53-128. Reprinted in Lie's  Gesammelte Abhandlundgen, Vol. 4.
1929 Paper IX.

\bibitem{olvmf} P.J. Olver, Moving frames, J. Symbolic
Comput. 36 (2003) 510-512.

\end{thebibliography}
\end{document}